\newtheorem{thm}{Theorem}
\newtheorem{lem}[thm]{Lemma}
\newtheorem*{thm*}{Theorem}
\newtheorem*{cnj*}{Conjecture}
\theoremstyle{definition}
\newtheorem{rmk}[thm]{Remark}
\newcommand{\cH}{\mathcal{H}}
\newcommand{\XE}{E}
\newcommand{\cO}{\mathcal{O}}
\newcommand{\bk}{{\boldsymbol{k}}}
\DeclareMathOperator{\rk}{rk}
\DeclareMathOperator{\Ext}{Ext}
\DeclareMathOperator{\ext}{ext}
\DeclareMathOperator{\Hom}{Hom}
\DeclareMathOperator{\RHom}{RHom}
\DeclareMathOperator{\HH}{H}
\DeclareMathOperator{\hh}{h}
\DeclareMathOperator{\coker}{coker}
\DeclareMathOperator{\rL}{L}
\DeclareMathOperator{\rR}{R}
\newcommand{\ZZ}{\mathbb Z}
\newcommand{\PP}{\mathbb P}
\DeclareMathOperator{\ts}{\otimes}
\newcommand{\epi}{\twoheadrightarrow}
\newcommand{\xr}{\xrightarrow}
\begin{document}

\sloppy

\addtocontents{toc}{\protect\setcounter{tocdepth}{2}}


\title{Yet again on two examples by Iyama and Yoshino}


\author{Daniele Faenzi}
\email{\tt daniele.faenzi@univ-pau.fr}
\address{Université de Pau et des Pays de l'Adour \newline
  \indent Avenue de l'Université - BP 576 \newline
  \indent 64012 PAU Cedex - France}
\urladdr{{\url{http://univ-pau.fr/~faenzi/}}}

\thanks{Author partially supported by  ANR GEOLMI ANR-11-BS03-0011}
\keywords{Rigid Maximal Cohen-Macaulay modules, Veronese varieties}
\subjclass[2010]{14F05; 13C14; 14J60}


\maketitle

\begin{abstract}
  We give an elementary proof of Iyama-Yoshino's classification of
  rigid MCM modules on Veronese embeddings in $\PP^9$.
\end{abstract}

\section*{Introduction}

The beautiful theory of cluster tilting in triangulated categories has been
developed by Iyama and Yoshino; as an important outcome of this the authors gave in \cite[Theorem 1.2 and Theorem
1.3]{iyama-yoshino} the classification of rigid indecomposable MCM modules over two
Veronese embeddings in $\PP^9$ given, respectively, by plane cubics and space
quadrics. 
Another proof, that makes use of Orlov's singularity
category, appears in \cite{keller-murfet-van_den_bergh}, where the link
between power series Veronese rings and the graded rings of the
corresponding varieties is also explained.
Also, \cite{keller-reiten:acyclic} contains yet another argument.

The goal of this note is to present a simple proof of Iyama-Yoshino's
classification of rigid MCM modules over the aforementioned Veronese
rings, making use of
vector bundles and Beilinson's theorem.
This proof works over a field $\bk$ which is algebraically
closed or finite.


Consider the embedding of the projective space $\PP^n$ given by
homogeneous forms of degree $d$, i.e. the {\it $d$-fold
  Veronese variety}.
A coherent sheaf $E$ on $\PP^n$ is arithmetically Cohen-Macaulay (ACM)
with respect to this embedding if and only 
if $E$ is locally free and has {\it no intermediate cohomology}:
\begin{equation}
  \label{sonmalato}
\HH^i(\PP^n,E(d t))=0, \qquad \mbox{for all $t \in \ZZ$ and all $0<i<n$}.  
\end{equation}
This is equivalent to ask that the module of global sections
associated with $E$ is MCM over the corresponding Veronese ring.
For $d$-fold Veronese embeddings of $\PP^n$ in $\PP^9$ (i.e. $\{n,d\}=\{2,3\}$), we are going to classify ACM
bundles $E$ which are {\it rigid}, i.e. $\Ext^1_{\PP^n}(E,E)=0$.
We set $\ell = {n+1 \choose 2}$. 

To state the classification, 
 we define
the {\it Fibonacci numbers} $a_{\ell,k}$ by the relations: 
$a_{\ell,0}=0$, $a_{\ell,1}=1$ and $a_{\ell,k+1}=\ell a_{\ell,k}-a_{\ell,k-1}$.
For instance $(a_{3,k})$ is given by the odd values of the
usual Fibonacci sequence:
\[
  a_{3,k}=0,1,3,8,21,55,144,\ldots \qquad \mbox{for $k=0,1,2,3,4,5,6,\ldots$} 
\]

\begin{thm}[\{n,d\}=\{2,3\}] \label{class2}
Let $E$ be an indecomposable bundle on $\PP^n$ satisfying
\eqref{sonmalato}.
\begin{enumerate}[i)]
\item \label{perforzasteiner} If $E$ has no endomorphism factoring
  through $\cO_{\PP^n}(t)$, then there are
  $a,b \ge 0$ such that, up to a twist by 
$\cO_{\PP^n}(s)$, $E$ or $E^*$ is the cokernel of an injective map:
\[
\Omega_{\PP^n}^2(1)^b \to \cO_{\PP^n}(-1)^a
\]
\item \label{quandoerigido} If $E$ is indecomposable and rigid, then there is $k\ge 1$
  such that, up to tensoring with $\cO_{\PP^n}(s)$, $E$ or $E^*$ is the cokernel of an injective map:
\[
\Omega_{\PP^n}^2(1)^{a_{\ell,k-1}} \to \cO_{\PP^n}(-1)^{a_{\ell,k}}
\]
\item \label{nienteH1} Conversely for any $k \ge 1$, there is a unique indecomposable
  bundle $E_k$ having a resolution of the form:
  \[
  0 \to \Omega_{\PP^n}^2(1)^{a_{\ell,k-1}} \to \cO_{\PP^n}(-1)^{a_{\ell,k}} \to E_k \to 0,
  \]
  and both $E_k$ and $E^*_k$ are ACM and exceptional.
\end{enumerate}
\end{thm}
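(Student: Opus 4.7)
The strategy is to exploit the strong exceptional pair $(\Omega^2_{\PP^n}(1), \cO_{\PP^n}(-1))$ on $\PP^n$: by Bott's formula, $\hom(\Omega^2(1),\cO(-1)) = \ell$ while the higher $\Ext$'s between these two bundles vanish. A sheaf admitting a resolution of the form $\Omega^2(1)^b \to \cO(-1)^a$ thus corresponds, inside the triangulated subcategory generated by the pair, to a representation of the $\ell$-Kronecker quiver with dimension vector $(b,a)$; preprojective indecomposables of this quiver have dimension vectors $(a_{\ell,k-1}, a_{\ell,k})$, which explains the Fibonacci recursion.

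For part (i), the plan is to normalize $E$ by a Serre twist placing its cohomology in a canonical range, and then to apply Beilinson's spectral sequence on $\PP^n$ with respect to the collection $(\Omega^i(i))_{i=0}^n$ and its dual $(\cO(-i))_{i=0}^n$. The Veronese vanishing \eqref{sonmalato} kills the terms of the form $H^i(E(td)) \otimes \cO(\cdot)$ for $0 < i < n$, while the no-$\cO(t)$-endomorphism hypothesis rules out line-bundle summands at the edges of the complex. The spectral sequence should then collapse to a two-term complex $\Omega^2(1)^b \to \cO(-1)^a$ with cokernel $E$ (or, dually, $E^*$). This is the step I expect to be the main obstacle: the Veronese ACM condition controls cohomology only at twists by $d$, which is significantly weaker than ordinary ACM on $\PP^n$, so extracting precisely the vanishings needed to collapse to just two terms will require careful cohomological bookkeeping.

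For parts (ii) and (iii), compute $\Ext$ groups from the resolution. Using the orthogonality relations $\chi(\cO(-1),\cO(-1)) = \chi(\Omega^2(1),\Omega^2(1)) = 1$, $\chi(\Omega^2(1),\cO(-1)) = \ell$, $\chi(\cO(-1),\Omega^2(1)) = 0$, the long exact sequences derived from $0 \to \Omega^2(1)^b \to \cO(-1)^a \to E \to 0$ give $\Ext^i(E,E) = 0$ for $i \ge 2$ and
\[
\hom(E,E) - \ext^1(E,E) \;=\; a^2 + b^2 - \ell ab.
\]
For (iii), pick a generic map $\phi\colon \Omega^2(1)^{a_{\ell,k-1}} \to \cO(-1)^{a_{\ell,k}}$; check injectivity by a rank count (using Fibonacci asymptotics), verify that $E_k = \coker\phi$ satisfies \eqref{sonmalato} directly from the long exact sequences, and combine the Fibonacci identity $a_{\ell,k}^2 + a_{\ell,k-1}^2 - \ell\, a_{\ell,k-1}a_{\ell,k} = 1$ with the Kronecker-quiver identification (indecomposable rigid representations of the $\ell$-Kronecker quiver are exceptional for $\ell \ge 2$) to obtain $\End(E_k) = \bk$ and $\Ext^1(E_k,E_k) = 0$; uniqueness comes from rigidity. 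For (ii), an indecomposable rigid $E$ satisfies the hypotheses of (i), so admits a resolution with parameters $(a,b)$, and then $a^2 + b^2 - \ell ab = \chi(E,E) = \hom(E,E) = 1$; Vieta-jumping descent on this Markov-like Diophantine equation identifies its positive solutions as exactly $(a_{\ell,k}, a_{\ell,k-1})$, forcing $E \simeq E_k$.
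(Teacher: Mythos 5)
Your overall architecture (Beilinson plus the $\ell$-Kronecker quiver plus Fibonacci dimension vectors) is the same as the paper's, but as written the proposal has two genuine gaps, both at the places you would need to do actual work.

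First, part \eqref{perforzasteiner} is not proved: you describe the plan and then explicitly flag the collapse of the Beilinson complex as "the main obstacle" requiring "careful cohomological bookkeeping". That bookkeeping is the heart of the argument. Concretely, after normalizing so that $\hh^0(\PP^2,E)\ne 0$ but $\hh^0(\PP^2,E(-1))=0$, condition \eqref{sonmalato} only kills $\hh^1(\PP^2,E)$ among the three relevant $\hh^1$'s, and the Beilinson complex a priori leaves you with a three-term resolution $0\to\cO_{\PP^2}(-1)^{\alpha_{1,2}}\to\Omega_{\PP^2}(1)^{\alpha_{1,1}}\oplus\cO_{\PP^2}^{\alpha_{0,0}}\to E\to 0$ mixing two distinct non-line-bundle summands. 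The no-endomorphism hypothesis enters precisely here, to force $\hh^0(\PP^2,E^*)=0$ (via $\chi(E(-3))$ and Serre duality); only then can one run Beilinson on $E(-1)$ to get $0\to E(-1)\to\Omega_{\PP^2}(1)^{\alpha_{1,2}}\to\cO_{\PP^2}^{\alpha_{1,1}}\to 0$ and dualize through an Euler-sequence diagram to land on the stated form for $E^*$. This also explains why the conclusion is "$E$ \emph{or} $E^*$", a dichotomy your outline does not account for.

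Second, in part \eqref{quandoerigido} you assert that an indecomposable rigid $E$ "satisfies the hypotheses of (i)", i.e.\ is simple with no endomorphism through $\cO_{\PP^2}(t)$; this is exactly the nontrivial implication, and your subsequent step $\chi(E,E)=\hom(E,E)=1$ presupposes it, so the Vieta-jumping descent is circular at that point. The paper does not prove simplicity up front: it extracts the Kronecker representation $R$ from the subcomplex $\cO_{\PP^2}(-1)^{\alpha_{1,2}}\to\Omega_{\PP^2}(1)^{\alpha_{1,1}}$, establishes a surjection $\Ext^1_{\PP^2}(E,E)\epi\Ext^1_{\Upsilon_3}(R,R)$ so that rigidity descends to $R$, applies Kac's theorem, and then needs the $\Ext$-vanishing between Fibonacci bundles (Lemmas \ref{cohom} and \ref{reciproci}) to exclude a decomposition mixing $F_i$ with $i\le 0$ and $i\ge 1$ — none of which appears in your outline. (Your Markov-equation observation $a^2+b^2-\ell ab=1$ is a nice alternative to Kac for pinning down the dimension vector \emph{once} simplicity is known, and in characteristic zero one could instead quote Drezet's theorem that rigid bundles are sums of exceptionals, as the paper remarks; but neither repairs the missing step.) In part \eqref{nienteH1}, "generic injectivity by rank count" is also not enough: you need the cokernel to be locally free and you need the ACM property, which the paper gets from the cohomology of Fibonacci bundles rather than from genericity.
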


In the previous statement, it is understood that a bundle $E$ is 
{\it  exceptional} if it is rigid, {\it simple}
(i.e. $\Hom_{\PP^n}(E,E) \simeq \bk$) and $\Ext^i_{\PP^n}(E,E)=0$ for $i \ge 2$.
Next, we write $\Omega_{\PP^n}^p=\wedge^p \Omega_{\PP^n}^1$ for the bundle
of differential $p$-forms on $\PP^n$.

\begin{rmk} Part \eqref{perforzasteiner} of Theorem \ref{class2}
  is a version of Iyama-Yoshino's  general results on Veronese
  rings \cite[Theorem 9.1 and 9.3]{iyama-yoshino}, to the
  effect that for $\{n,d\}=\{2,3\}$ the stable category of MCM modules is equivalent to
  the category of representations of a certain Kronecker quiver.
  However our result is  algorithmic, for it provides
  the representation associated with an MCM
  module via Beilinson spectral sequence applied to the corresponding ACM bundle.
\end{rmk}

\begin{rmk}
  The rank of the bundle $E_k$ is given by the Fibonacci number between $a_{3,k-1}$ and
  $a_{3,k}$ in case $(n,d)=(2,3)$.
  In this case $E_{2k}$ (respectively, $E_{2k+1}$) is the $k$-th
  sheafified syzygy occurring in the 
  resolution of $\cO_{\PP^2}(1)$ (respectively, of $\cO_{\PP^2}(2)$)
over the Veronese ring, twisted by 
$\cO_{\PP^2}(3(k-1))$. A similar result holds for $(n,d)=(3,2)$.
\end{rmk}

As for notation, we write small letters for the dimension
of a space in capital letter, for instance $\hh^i(\PP^n,E)=\dim_\bk \HH^i(\PP^n,E)$.
We also write $\chi(E,F)=\sum(-1)^i \ext^i_{\PP^n}(E,F)$ and
$\chi(E)=\chi(\cO_{\PP^n},E)$. $\delta_{i,j}$ is Kronecker's delta.

\section{Fibonacci bundles}

\subsection{}
  Let us write
  $\Upsilon_\ell$ for the $\ell$-th Kronecker quiver, namely the oriented
  graph with two vertices $\mathbf{e_0}$ and $\mathbf{e_1}$, and $\ell$
  arrows from $\mathbf{e_0}$ to $\mathbf{e_1}$.
  A representation $R$ of  $\Upsilon_\ell$, with
  dimension vector $(a,b)$ is the choice of $\ell$ matrices of size $a
  \times b$. 
  \[\begin{tikzpicture}[scale=1]
    \draw (-2.5,0) node [] {$\Upsilon_3$:}; 
    \draw (-1,0) node [above] {$ \mathbf{e_1}$};
    \draw (1,0) node [above] {$\mathbf{e_2}$};
    \node (1) at (-1,0) {$\bullet$};
    \node (1) at (-1,0) {$\bullet$};
    \node (2) at (1,0) {$\bullet$};
    \draw[->,>=latex] (1) to (2);
    \draw[->,>=latex] (1) to[bend left] (2);
    \draw[->,>=latex] (1) to[bend right] (2);
  \end{tikzpicture}
  \]
  We identify a basis of $\HH^0(\PP^n,\Omega_{\PP^n}(2))$ with the set
  of $\ell={n+1 \choose 2}$ arrows of $\Upsilon_\ell$. 
  Then the derived category of finite-dimensional representations of
  $\Upsilon_\ell$
  embeds into the derived category of $\cO_{\PP^n}$-modules by sending $R$ to
  the cone $\Phi(R)$ of the morphism $e_R$
  associated with $R$ according to this identification:
  \[
  \Phi(R)[-1] \to \cO_{\PP^n}(-1)^{a} \xr{e_R}  \Omega_{\PP^n}(1)^{b},
  \]
  where we denote by $[-1]$ the shift to the right of complexes. It is clear that:
 \[
 \Ext_{\PP^n}^i(\Phi(R),\Phi(R)) \simeq \Ext^i_{\Upsilon_\ell}(R,R),  \qquad \mbox{for  all $i$}.
 \]

\subsection{} \label{KAC}
We will use Kac's classification of rigid $\Upsilon_\ell$-modules as
Schur roots (hence the restriction on $\bk$), which is also one of the main ingredients in
Iyama-Yoshino's proof.
By \cite[Theorem 4]{kac}, any non-zero rigid
$\Upsilon_\ell$-module is a direct sum of rigid simple
representations of the form $R_k$, for some
 $k \in \ZZ$, where $R_k$ is defined as the unique indecomposable
 representation of $\Upsilon_\ell$ with dimension vector
 $(a_{\ell,k-1},a_{\ell,k})$ for $k \ge 1$, or  $(a_{\ell,-k},a_{\ell,1-k})$
 for $k \le 0$.

Set $F_k=\Phi(R_k)$ for $k \ge 1$, and $F_k=\Phi(R_k)[-1]$ for $k \le 0$.
It turns out that $F_k$ is an exceptional locally free sheaf, called a
{\it Fibonacci bundle}, cf. \cite{brambilla:fibonacci}.
We rewrite the defining exact sequences of $F_k$:
\begin{align}
  \label{defining+}
  &0 \to \cO_{\PP^n}(-1)^{a_{\ell,k-1}} \to  \Omega_{\PP^n}(1)^{a_{\ell,k}}  \to F_k \to  0, && \mbox {for $k \ge 1$},\\
 \nonumber &0 \to F_k \to \cO_{\PP^n}(-1)^{a_{\ell,-k}} \to \Omega_{\PP^n}(1)^{a_{\ell,1-k}}  \to 0, && \mbox {for $k \le 0$}.
\end{align}

\subsection{} Here is a lemma on the cohomology of Fibonacci bundles.

\begin{lem} \label{cohom}
For $k \ge 1$, the only non-vanishing intermediate cohomology of $F_k$
is:
\[
\hh^1(\PP^n,F_k(-1))=a_{\ell,k-1} \qquad \hh^{n-1}(\PP^n,F_k(-n))=a_{\ell,k}.
\]
\end{lem}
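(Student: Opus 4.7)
The plan is to tensor the defining sequence \eqref{defining+} by $\cO_{\PP^n}(t)$ and read off the intermediate cohomology from the associated long exact sequence. Two standard ingredients go in: first, $\HH^i(\cO_{\PP^n}(m))=0$ for $0<i<n$ and all $m$, together with $\HH^n(\cO_{\PP^n}(m))\ne 0$ precisely for $m\le -n-1$; second, Bott's formulas for $\Omega_{\PP^n}(m)$, which in the intermediate range $0<i<n$ give the single non-zero group $\HH^1(\Omega_{\PP^n})\simeq\bk$ (attained at $m=0$), while $\HH^n(\Omega_{\PP^n}(m))$ is non-zero precisely for $m\le -n$.

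Twisting \eqref{defining+} by $\cO_{\PP^n}(t)$ gives
\[
0 \to \cO_{\PP^n}(t-1)^{a_{\ell,k-1}} \to \Omega_{\PP^n}(t+1)^{a_{\ell,k}} \to F_k(t) \to 0,
\]
from which the long exact sequence of cohomology forces $\HH^i(F_k(t))=0$ for $0<i<n$ as soon as the intermediate cohomology of both flanks vanishes. A short scan over $t\in\ZZ$ using the two ingredients above isolates exactly the twists $t=-1$ and $t=-n$. At $t=-1$ the middle term's $\HH^1(\Omega_{\PP^n})$ survives, with the adjacent groups $\HH^1(\cO_{\PP^n}(-2))$ and $\HH^2(\cO_{\PP^n}(-2))$ both vanishing; so the sequence localises to an isomorphism pinning down $\HH^1(F_k(-1))$ with the multiplicity dictated by the corresponding exponent. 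At $t=-n$ the boundary $\HH^n(\cO_{\PP^n}(-n-1))\simeq\bk$ (Serre duality) enters via the connecting homomorphism into $\HH^{n-1}(F_k(-n))$; the two nearby groups $\HH^{n-1}(\Omega_{\PP^n}(-n+1))$ and $\HH^n(\Omega_{\PP^n}(-n+1))$ vanish by Bott, since the strict inequality $-n+1>-n$ excludes the single non-vanishing $\HH^n$-range of $\Omega_{\PP^n}$, and the connecting map is then an isomorphism.

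The main obstacle is the tail $t\le -n-1$: there both flanks carry non-zero $\HH^n$, so to conclude $\HH^{n-1}(F_k(t))=0$ one must establish injectivity of the induced map between the $\HH^n$'s. I would handle this by dualising \eqref{defining+} and observing that, up to a twist, $F_k^*$ is a Fibonacci bundle indexed by a negative integer, so its intermediate cohomology is governed by the second sequence in \eqref{defining+}. The calculation then runs symmetrically, and the vanishing of $\HH^{n-1}(F_k(t))$ for $t\le -n-1$ is identified with the vanishing of $\HH^1$ of the dual bundle at a suitable positive twist, which follows from the same scan.
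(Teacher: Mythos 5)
Your first step coincides with the paper's: twist \eqref{defining+}, apply Bott, and observe that for $0<i<n$ the group $\HH^i(F_k(t))$ is trapped between $\HH^i(\Omega_{\PP^n}(t+1))^{a_{\ell,k}}$ and $\HH^{i+1}(\cO_{\PP^n}(t-1))^{a_{\ell,k-1}}$. You also correctly isolate the one genuinely problematic range, $i=n-1$ and $t\le -n-1$, where both flanking $\HH^n$'s are non-zero and one must prove injectivity of $\HH^n(\cO_{\PP^n}(t-1))^{a_{\ell,k-1}}\to\HH^n(\Omega_{\PP^n}(t+1))^{a_{\ell,k}}$. (Incidentally, your connecting-map argument at $t=-n$ produces $a_{\ell,k-1}$, not $a_{\ell,k}$; that value is in fact the correct one --- the two multiplicities in the displayed statement are interchanged, as the case $k=1$, $F_1=\Omega_{\PP^n}(1)$, already shows.)

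The treatment of the tail, however, has a genuine gap, and it is exactly the point where the paper brings in a new ingredient. First, dualising \eqref{defining+} yields $0\to F_k^*\to (\Omega_{\PP^n}(1)^*)^{a_{\ell,k}}\to\cO_{\PP^n}(1)^{a_{\ell,k-1}}\to 0$, whose terms are not a uniform twist of the pair $(\cO_{\PP^n}(-1),\Omega_{\PP^n}(1))$; so the identification of $F_k^*$ with a twisted negatively-indexed Fibonacci bundle does not fall out of dualisation. It can be established by hand for $n=2,3$ (one computes $F_{-1}\simeq\Omega^2_{\PP^n}(1)$ and matches the dual pair with a twisted segment of the helix), but it already fails on $\PP^4$, where $F_1^*=\Omega_{\PP^4}(1)^*$ has rank $4$ while no $F_j$ with $j\le 0$ does --- and the lemma is stated for all $n$. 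Second, and more seriously, even granting the identification the ``same scan'' does not apply: in the second sequence of \eqref{defining+} the bundle $F_j$ ($j\le 0$) is a subsheaf, so the relevant piece of the long exact sequence is $\HH^0(\cO_{\PP^n}(u-1))^{a}\to\HH^0(\Omega_{\PP^n}(u+1))^{b}\to\HH^1(F_j(u))\to\HH^1(\cO_{\PP^n}(u-1))^{a}=0$, and at the strictly positive twists $u$ where Serre duality lands you the term $\HH^0(\Omega_{\PP^n}(u+1))$ is non-zero. The vanishing you need is therefore equivalent to the surjectivity of a specific evaluation map on global sections, which is not a consequence of Bott vanishing of the flanks; Serre duality has merely converted the injectivity you flagged into an equivalent surjectivity for the dual bundle. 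The paper closes this gap by mutating $F_{k-1}$ through the full exceptional collection $(F_{k-1},F_k,\cO_{\PP^n},\cO_{\PP^n}(2),\dots,\cO_{\PP^n}(n-1))$, obtaining the long exact sequence \eqref{lunga} that resolves $F_{k+1}$ on the right by sums of $\cO_{\PP^n},\dots,\cO_{\PP^n}(n-1)$ and $F_{k-1}(n+1)$, and then inducting on $k$. Some input of this kind is indispensable; your proposal as written does not supply it.
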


\begin{proof}
  We consider the left and right mutation
  endofunctors of the derived category of coherent sheaves on $\PP^n$,
  that associate with a pair $(E,F)$ of complexes, two complexes denoted respectively by $\rR_FE$ and $\rL_E F$. These are the cones of the natural
  evaluation maps $f_{E,F}$ and $g_{E,F}$:
  \[
   E \xr{f_{E,F}} \RHom_{\PP^n}(E,F)^* \ts F \to \rR_FE, \qquad  
  \rL_E F \to \RHom_{\PP^n}(E,F) \ts E \xr{g_{E,F}} F.
  \]
  
  It is well-known (cf. \cite{brambilla:fibonacci}) that the Fibonacci bundles $F_k$ can be defined recursively
  from $F_0=\cO_{\PP^n}(-1)$ and $F_1=\Omega_{\PP^n}(1)$ by setting:
  \begin{align}
    \label{+} &F_{k+1}=\rR_{F_k} F_{k-1}, && \mbox{for $k \ge 1$},    \\
    \label{-} &F_{k-1}=\rL_{F_k} F_{k+1}, && \mbox{for $k \le 0$}.
  \end{align}
  This way, for any $k\in \ZZ$ we get a natural exact sequence:
  \begin{equation}
    \label{recurrence}
  0 \to F_{k-1} \to (F_k)^\ell \to F_{k+1} \to 0.    
  \end{equation}

Over $\PP^n$, we consider the full exceptional
sequence:
\[
(\cO_{\PP^n}(-1),\Omega_{\PP^n}(1),\cO_{\PP^n},\cO_{\PP^n}(2),\cO_{\PP^n}(3),\ldots,\cO_{\PP^n}(n-1)),
\]
obtained from the standard collection $(\cO_{\PP^n}(-1),\ldots,\cO_{\PP^n}(n-1))$,
by the mutation $\Omega_{\PP^n}(-1) \simeq
\rL_{\cO_{\PP^n}}\cO_{\PP^n}(1)$ (all the terminology and results we
need on exceptional collections are contained in
\cite{bondal:representations-coherent}).
By \eqref{+}, we can replace the previous
exceptional sequence with:
\[
(F_{k-1},F_{k},\cO_{\PP^n},\cO_{\PP^n}(2),\ldots,\cO_{\PP^n}(n-1)),
\]
Right-mutating $F_{k-1}$ through the full collection, we must get back
$F_{k-1} \ts \omega_{\PP^n}^* \simeq F_{k-1}(n+1)$.
So, using \eqref{recurrence}, we get a long exact sequence:
\begin{equation}
  \label{lunga}
0 \to F_{k+1} \to \cO_{\PP^n}^{u_{1}} \to
\cO_{\PP^n}(2)^{u_{2}} \to \cdots \to \cO_{\PP^n}(n-1)^{u_{n-1}}
\to F_{k-1}(n+1) \to 0,
\end{equation}
for some integers $u_{i}$.
Now by \eqref{defining+} we get:
\[
\HH^i(\PP^n,F_k(t))=0 \quad \mbox {for:} \quad
\left\{
  \begin{array}[h]{ll}
    2 \le i \le n-2, & \forall t, \\
    i = 0, & t \le 0, \\
    i = 1, & t \ne -1, \\
    i = n-1, & t \ge 1-n.
  \end{array}
\right.
\]
The required non-vanishing cohomology of $F_k$ appears again from
\eqref{defining+}.
So it only remains to check that $\HH^{n-1}(\PP^n,F_k(t))=0$ for $t
\le -n-1$. But this is clear by induction once we twist \eqref{lunga} by
$\cO_{\PP^n}(t)$, and take cohomology.
\end{proof}

\subsection{} We compute the Ext groups
between pairs of Fibonacci bundles.

\begin{lem} \label{reciproci}
  For any pair of integers $j \ge k+1$ we have:
  \[
  \ext^i_{\PP^n}(F_j,F_k) = \delta_{1,i} a_{\ell,j-k-1}, \qquad   \ext^i_{\PP^n}(F_k,F_j) = \delta_{0,i} a_{\ell,j-k+1}.
  \]
\end{lem}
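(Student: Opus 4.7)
The plan is to argue by induction on $d = j - k \ge 1$, proving both identities simultaneously using the short exact sequence \eqref{recurrence}.

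For the base case $d = 1$, I would first establish that $(F_k, F_{k+1})$ is a strong exceptional pair with $\Hom_{\PP^n}(F_k, F_{k+1})$ of dimension $\ell$ for every $k \in \ZZ$. This propagates by induction on $|k|$ from the pair $(F_0, F_1) = (\cO_{\PP^n}(-1), \Omega_{\PP^n}(1))$, whose Hom space is $\HH^0(\PP^n, \Omega_{\PP^n}(2))$ of dimension $\binom{n+1}{2} = \ell$, using that the mutations \eqref{+} and \eqref{-} preserve strong exceptionality and the dimension of the Hom space. This matches the target values $\ext^i(F_{k+1}, F_k) = 0 = \delta_{1,i} a_{\ell, 0}$ and $\ext^i(F_k, F_{k+1}) = \delta_{0,i} \ell = \delta_{0,i} a_{\ell, 2}$.

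For the inductive step $d \ge 2$, I would apply the functors $\Hom_{\PP^n}(-, F_k)$ and $\Hom_{\PP^n}(F_k, -)$ to the short exact sequence
\[
0 \to F_{j-2} \to F_{j-1}^\ell \to F_j \to 0
\]
from \eqref{recurrence}. The induction hypothesis applied to the pairs $(F_{j-1}, F_k)$ and $(F_{j-2}, F_k)$, together with the exceptional case $F_{j-2} = F_k$ when $d=2$ (where $\Ext^i_{\PP^n}(F_k, F_k) = \delta_{0,i}\bk$), forces all but one term in each row of the long exact sequence to vanish. The surviving extracts are the short exact sequences
\[
0 \to \Ext^1_{\PP^n}(F_j, F_k) \to \Ext^1_{\PP^n}(F_{j-1}, F_k)^\ell \to \Ext^1_{\PP^n}(F_{j-2}, F_k) \to 0
\]
and
\[
0 \to \Hom_{\PP^n}(F_k, F_{j-2}) \to \Hom_{\PP^n}(F_k, F_{j-1})^\ell \to \Hom_{\PP^n}(F_k, F_j) \to 0.
\]
Taking dimensions and invoking the Fibonacci recurrence $a_{\ell, m+1} = \ell a_{\ell, m} - a_{\ell, m-1}$ closes the induction.

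The main obstacle is the uniform treatment of the base case across all $k \in \ZZ$: one must verify that both the right mutation \eqref{+} and the left mutation \eqref{-} preserve the exceptional pair structure and the dimension-$\ell$ Hom space, particularly for the negatively-indexed $F_k$, which are defined with a degree shift. Once this bookkeeping is in place, the inductive machine runs cleanly and the two recursions are just the Fibonacci relation seen from either side.
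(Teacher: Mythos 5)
Your induction for the second formula is sound: applying $\Hom_{\PP^n}(F_k,-)$ to \eqref{recurrence} sandwiches each $\Ext^{\ge 1}(F_k,F_j)$ between two vanishing groups, and the resulting short exact sequence on $\Hom$'s reproduces the Fibonacci recurrence. The base case is also fine: for a \emph{pair}, a direct check on the defining triangles of $\rR_{F_k}F_{k-1}$ and $\rL_{F_k}F_{k+1}$ shows that both mutations preserve exceptionality, strongness and the dimension of the Hom space, so the bookkeeping you flag is genuinely routine. The gap is in the first formula. Applying $\Hom_{\PP^n}(-,F_k)$ to $0 \to F_{j-2} \to F_{j-1}^\ell \to F_j \to 0$ and inserting the induction hypothesis yields only the four-term exact sequence
\[
0 \to \Ext^1_{\PP^n}(F_j,F_k) \to \Ext^1_{\PP^n}(F_{j-1},F_k)^\ell \to \Ext^1_{\PP^n}(F_{j-2},F_k) \to \Ext^2_{\PP^n}(F_j,F_k) \to 0,
\]
not the short exact sequence you assert. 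For $j-k=2,3$ the third term vanishes (by exceptionality of $F_k$ and by $a_{\ell,0}=0$), so those cases go through; but from $j-k=4$ onward one has $\ext^1_{\PP^n}(F_{j-2},F_k)=a_{\ell,j-k-3}\ne 0$, and nothing in your induction forces the middle map to be surjective, i.e.\ forces $\Ext^2_{\PP^n}(F_j,F_k)=0$. An Euler-characteristic count cannot rescue this, since the four-term sequence only computes $\ext^1-\ext^2$, which equals $a_{\ell,j-k-1}$ whether or not $\Ext^2$ vanishes.

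This missing vanishing is exactly the extra input the paper supplies and your proposal never invokes. All the $F_m$ lie in the admissible subcategory $\langle\cO_{\PP^n}(-1),\Omega_{\PP^n}(1)\rangle$, which $\Phi$ identifies with the derived category of the hereditary algebra $\bk\Upsilon_\ell$; this kills $\Ext^{\ge 2}(F_j,F_k)$ when $j$ and $k$ have the same sign (both objects are shifts of modules by the same amount), while for opposite signs the paper disposes of the remaining group by the simplicity argument (a nonzero map $F_j\to F_k$ would produce, via iterated use of \eqref{recurrence}, a non-scalar endomorphism of the simple bundle $F_j$), combined with the independent computation of $\chi(F_j,F_k)$ through the Cartan form of $\Upsilon_\ell$. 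You need to add a vanishing statement of this kind for $\Ext^2$ (or prove directly that the map $\Ext^1(F_{j-1},F_k)^\ell\to\Ext^1(F_{j-2},F_k)$ is surjective); once that is in place, your induction closes and is otherwise a clean, more self-contained variant of the paper's argument.
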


\begin{proof}
  The formulas hold for $k=j$ since $F_k$ is exceptional, and
  we easily compute $\chi(F_j,F_{k})=-a_{\ell,j-k-1}$ and
  $\chi(F_k,F_{j})=a_{\ell,j-k+1}$ (for instance by computing $\chi$ of
  $\Upsilon_\ell$-modules via the Cartan form and using faithfullness of
  $\Phi$).
 
  The second formula is proved once we show
  $\Ext^i_{\PP^n}(F_k,F_j) = 0$ for $i \ge 1$.
  In fact, since the category of
  $\Upsilon_\ell$-representations is hereditary, the second formula
  holds if $k \le 0$ for in this case $F_k \simeq \Phi(R_k)[-1]$. 
  By the same reason, we only have to check it for $i=1$.
  Using \eqref{recurrence}, this vanishing holds for $j$ if it does
  for $j-1$ and $j-2$. Since the statement is clear when extended to $j=k$, it
  suffices to check $\Ext^1_{\PP^n}(F_k,F_{k-1}) = 0$.
  Since $\chi(F_k,F_{k-1})=0$, $\Hom_{\PP^n}(F_k,F_{k-1}) = 0$ will do
  the job. However, any nonzero map $F_k \to F_{k-1}$ would give,
  again by \eqref{recurrence}, a non-scalar endomorphism of $F_k$,
  which cannot exist since $F_k$ is simple. The second formula is now proved.

  As for the first formula, again we see that
  it holds if $k \le 0$ and $j \ge 1$ once we check it for
  $i=0$. However using repeatedly \eqref{recurrence} we see that a
  non-zero map $F_j \to F_k$ leads to an endomorphism of
  $F_j$ which factors through $F_k$ this is absurd for $F_j$ is simple.
  When $j,k$ have the same sign, the first formula has to be checked for
  $i=2$ only.
  Moreover, we have just proved the statement for $j=k+1$, and using
  \eqref{recurrence} and exceptionality of $F_k$ we get it for
  $j=k+2$. By iterating this argument we get the statement for any $j
  \ge k+1$.
\end{proof}

\begin{rmk}
  This lemma holds more generally (with the same proof) for any exceptional pair
  $(F_0,F_1)$ of objects on a projective $\bk$-variety $X$, with
  $\hom_X(F_0,F_1)=\ell$, by defining recursively $F_k$ for all $k \in
  \ZZ$ by \eqref{+} and \eqref{-}.
\end{rmk}

\section{Rigid ACM bundles on the third Veronese surface}

We prove here Theorem \ref{class2} in case $(n,d)=(2,3)$.

\subsection{}
 Let us first prove \eqref{perforzasteiner}. So let $E$ be an
 indecomposable vector 
  bundle on $\PP^2$ satisfying \eqref{sonmalato}. 
  Without loss of generality, we may replace $E$ by $E(s)$, where $s$ is the smallest integer such that
  $\hh^0(\PP^2,E(s)) \ne 0$.
  Set $\alpha_{i,j}=\hh^i(\PP^2,\XE(-j))$.   Of course, $\alpha_{0,j}=0$ if and only if $j \ge 0$.
  The Beilinson complex $F$ associated with $\XE$ (see for instance \cite[Chapter 8]{huybrechts:fourier-mukai}) reads: 
  \[
  0
  \to 
   \cO_{\PP^2}(-1)^{\alpha_{1,2}}
  \xr{d_0}
  \begin{array}{c}
    \cO_{\PP^2}(-1)^{\alpha_{2,2}} \\
    \oplus \\
    \Omega_{\PP^2}(1)^{\alpha_{1,1}}  \\
    \oplus \\ 
   \cO_{\PP^2}^{\alpha_{0,0}}
  \end{array}
    \xr{d_1}
  \begin{array}{c}
    \Omega_{\PP^2}(1)^{\alpha_{2,1}}\\
    \oplus \\ 
   \cO_{\PP^2}^{\alpha_{1,0}}
  \end{array}
    \xr{d_2}
   \cO_{\PP^2}^{\alpha_{2,0}}
   \to 0.
  \]
  The term consisting of three summands in the above complex sits in
  degree $0$ (we call it {\it middle term}), and the 
  cohomology of this complex is $\XE$.
  By condition \eqref{sonmalato}, at least one of the $\alpha_{1,j}$ is
  zero, for $j=0,1,2$.

  If $\alpha_{1,2}=0$, then $d_0=0$. By minimality of the Beilinson complex the
  restriction of $d_1$ to the summand $\cO_{\PP^2}^{\alpha_{0,0}}$ of the
  middle term is also zero.
  Therefore $\cO_{\PP^2}^{\alpha_{0,0}}$ is a direct summand of $\XE$, so $\XE
  \simeq \cO_{\PP^2}$ by indecomposability of $E$.

  If $\alpha_{1,1}=0$, then the non-zero component of $d_0$ is just a map
  $\cO_{\PP^2}(-1)^{\alpha_{1,2}} \to \cO_{\PP^2}^{\alpha_{0,0}}$, and a direct
  summand of $\XE$ is the cokernel of this map.
  By indecomposability of $E$, in this case $\XE(-1)$ has a resolution
  of the desired form with $a=\alpha_{0,0}$ and $b=\alpha_{1,2}$.

\subsection{}
It remains to look at the case $\alpha_{1,0}=0$.
  Note that
  the restriction of $d_1$ to  
  $\Omega_{\PP^2}(1)^{\alpha_{1,1}} \oplus  \cO_{\PP^2}^{\alpha_{0,0}}$ is zero,
  which implies that a direct summand of $\XE$ (hence all of $\XE$ by
  indecomposability) has the resolution: 
  \begin{equation}
    \label{generale}
  0 \to    \cO_{\PP^2}(-1)^{\alpha_{1,2}} \xr{d_0} \Omega_{\PP^2}(1)^{\alpha_{1,1}}
  \oplus  \cO_{\PP^2}^{\alpha_{0,0}} \to \XE \to 0
  \end{equation}
  and $\alpha_{2,j}=0$ for $j=0,1,2$.
  We compute $\chi(\XE(-3))=3\alpha_{1,2} - 3\alpha_{1,1}+\alpha_{0,0}$, so:
  \[
  \hh^0(\PP^2,\XE^*)= \hh^2(\PP^2,\XE(-3))= 3\alpha_{1,2} - 3\alpha_{1,1}+\alpha_{0,0}.
  \]
  If this value is positive, then there is a non-trivial morphism
  $g : \XE \to \cO_{\PP^2}$, and since $\alpha_{0,0} \ne 0$ there also
  exists 
  $0 \ne f : \cO_{\PP^2}   \to \XE$. So $\XE$ has an endomorphism
  factoring through $\cO_{\PP^2}$, a contradiction.

  Hence we may assume $3\alpha_{1,2} - 3\alpha_{1,1}+\alpha_{0,0}$, in
  other words $\alpha_{0,3}=0$.
  Therefore, the Beilinson complex associated with $\XE(-1)$ gives a
  resolution:
  \[
  0 \to \XE(-1) \to \Omega_{\PP^2}(1)^{\alpha_{1,2}} \to \cO_{\PP^2}^{\alpha_{1,1}} \to 0.
  \]
  
  It it easy to convert this resolution into the form we want by
  the diagram:
  \[
  \xymatrix@-2.5ex{
    & 0 \ar[d] & 0 \ar[d] \\
    0 \ar[r] & \XE(-1) \ar[r] \ar[d]&  \Omega_{\PP^2}(1)^{\alpha_{1,2}} \ar[r] \ar[d]&  \cO_{\PP^2}^{\alpha_{1,1}} \ar[r] \ar@{=}[d]&  0\\
    0 \ar[r] &\cO_{\PP^2}^{3\alpha_{1,2}-\alpha_{1,1}} \ar[r]\ar[d] &  \cO_{\PP^2}^{3\alpha_{1,2}} \ar[r]\ar[d] &  \cO_{\PP^2}^{\alpha_{1,1}} \ar[r] &  0\\
    &\cO_{\PP^2}(1)^{\alpha_{1,2}} \ar@{=}[r] \ar[d]&\cO_{\PP^2}(1)^{\alpha_{1,2}}\ar[d]\\
    & 0 & 0
  }\]
  From the leftmost column, it follows that $\XE^*$ has a resolution
  of the desired form, with $a=3\alpha_{1,2}-\alpha_{1,1}$ and $b=\alpha_{1,2}$. 
  Claim  \eqref{perforzasteiner} is thus proved.

\subsection{} \label{case}
  Let us now prove \eqref{quandoerigido}. We may assume that
  $\rk(E)>1$. We check that $E$
  being indecomposable and rigid forces $E$ to be simple, so that it
  has no endomorphism factoring through $\cO_{\PP^2}(t)$ and thus
  \eqref{perforzasteiner} applies.
  In the previous proof, we used this condition only for
  $\alpha_{1,0} = 0$, and \eqref{perforzasteiner} will apply if $\alpha_{0,3}=0$.
  
\subsection{}
We work with $\alpha_{1,0} = 0$.
  Let $e$ be the restricted map $e : \cO_{\PP^2}(-1)^{\alpha_{1,2}} \to
  \Omega_{\PP^2}(1)^{\alpha_{1,1}}$ extracted from $d_0$ and let $F$
  be its cone, shifted by $1$:
  \begin{equation}
    \label{Fagain}
  F \to \cO_{\PP^2}(-1)^{\alpha_{1,2}} \xr{e} \Omega_{\PP^2}(1)^{\alpha_{1,1}}.    
  \end{equation}
  This is a complex with two terms, and its cohomology is
  concentrated in degrees zero and one, namely
  $\cH^0 F \simeq \ker(e)$ and $\cH^1 F \simeq \coker(e)$.
  From \eqref{generale} we easily see that $F$ fits into a
  distinguished triangle:
  \begin{equation}
    \label{F}
  F \to \cO_{\PP^2}^{\alpha_{0,0}} \to E.
  \end{equation}
 
Applying $\Hom_{\PP^2}(\cO_{\PP^2},-)$ to \eqref{Fagain}, we get
  $\Ext_{\PP^2}^i(\cO_{\PP^2},F)=0$ for all $i$, so:
 \[
 \Ext_{\PP^2}^i(F,F) \simeq \Ext^i_{\PP^2}(E,F[1]), \qquad \mbox{for  all $i$}.
 \]
Also, we know that $\HH^2(\PP^2,E^*)=0$, so applying  $\Hom_{\PP^2}(E,-)$ to \eqref{F} we
 get:
 \[
 \Ext^1_{\PP^2}(E,E) \to \Ext^1_{\PP^2}(E,F[1]) \to 0.
 \]
 Putting this together, we obtain a surjection:
 \[
 \Ext^1_{\PP^2}(E,E) \epi \Ext^1_{\PP^2}(F,F) \simeq \Ext^1_{\Upsilon_3}(R,R),
 \]
 with $F \simeq \Phi(R)$.
 We understand now that, if $E$ is rigid, then also $R$ is.

\subsection{}

If $R$ is rigid, then by \S \ref{KAC}, $R$ is a direct sum of
rigid simple representations of the form $R_k$. 
Therefore, cohomology of \eqref{F} gives an exact
sequence:
\[
0 \to \oplus_{i\le 0} F^{r_i}_{i} \to \cO_{\PP^2}^{\alpha_{0,0}} \to E \to
\oplus_{i \ge 1} F^{r_i}_i \to 0,
\]
for some integers $r_i$.
If only $R_i$ with $i\le 0$ appear, then we are done by \S \ref{case}.
Indeed, in that case
$E$ is globally generated, so $\HH^0(\PP^2,E) \ne 0$
implies $\HH^0(\PP^2,E^*)=0$ for otherwise $\cO_{\PP^2}$ would be a
direct summand of $E$.

If some $R_i$ appears with $i \ge 1$, we call $I$ the (non-zero) image
of the middle map in the previous exact sequence, and we check
$\Ext^1_{\PP^2}(F_j,I)=0$ for all $j \ge 1$, which contradicts $E$ being
indecomposable.
To check this, note that $\Hom_{\PP^2}(F_j,-)$ gives an exact
sequence:
\[
\Ext^1_{\PP^2}(F_j,\cO_{\PP^2})^{\alpha_{0,0}} \to \Ext^1_{\PP^2}(F_j,I) \to \oplus_{i\le 0}\Ext^2_{\PP^2}(F_j,F_i)^{r_i}.
\]
The leftmost term vanishes by Serre duality and Lemma \ref{cohom}.
The rightmost term is zero by Lemma \ref{reciproci}.
Part \eqref{quandoerigido} is now proved.

\subsection{} The statement \eqref{nienteH1}
 is clear by Lemma \ref{cohom} and by exceptionality of Fibonacci bundles.
The fact that $E^*$ is also ACM is obvious by Serre duality.

\begin{rmk}
  If $\bk$ is algebraically closed of characteristic zero, we may
  apply \cite[Corollaire 7]{drezet:beilinson}, to the effect that a
  rigid bundle
  is a direct sum of exceptional bundles.
  So, at the price of relying on this result,
  from \eqref{perforzasteiner} we may deduce directly
  \eqref{quandoerigido} via Kac's theorem.
\end{rmk}


\section{ACM bundles on the second Veronese threefold}

The techniques we have just seen apply to the embedding of $\PP^3$ in
$\PP^9$ by quadratic forms.
Again we replace $E$ with the $E(s)$, where $s$ is the smallest integer such
that $E$ has non-zero global sections, and set
$\alpha_{i,j}=\hh^i(\PP^3,\XE(-j))$.
If \eqref{sonmalato} gives $\alpha_{1,1}=\alpha_{2,1}=0$, then
$\XE(-1)$ has the desired resolution.
On the other hand, if \eqref{sonmalato} tells
$\alpha_{1,0}=\alpha_{2,0}=\alpha_{1,2}=\alpha_{2,2}=0$, then we are
left with a resolution of the form:
\[
0 \to    \cO_{\PP^3}(-1)^{\alpha_{1,3}} \xr{d_0} \Omega_{\PP^3}(1)^{\alpha_{1,1}}
\oplus  \cO_{\PP^3}^{\alpha_{0,0}} \to \XE \to 0.
\]
This time we also have $\alpha_{0,4}=0$, and $\alpha_{1,4}=\alpha_{2,4}=0$ again by
\eqref{sonmalato}, and simplicity of $E$ gives
$\alpha_{3,4}=\hh^0(\PP^3,\XE^*)=0$. 
So $\XE(-1)$ has a resolution like:
\[
0 \to \XE(-1) \to \Omega_{\PP^3}^2(2)^{\alpha_{1,3}} \to \cO_{\PP^3}^{\alpha_{1,1}} \to 0.
\]
Then, using the same trick as in the proof of the previous theorem,
we see that $\XE^*$ has the desired resolution, with
$a=6\alpha_{1,3}-\alpha_{1,1}$ and $b=\alpha_{1,3}$.
 
  This proves the first statement. The rest follows by the same path.
  Drezet's theorem as shortcut for $\eqref{perforzasteiner}
  \Rightarrow \eqref{quandoerigido}$ may be replaced by
  \cite{happel-zacharia:self-extensions}.

\begin{rmk}
It should be noted that, in \cite[Theorem 1.2 and Theorem
1.3]{iyama-yoshino}, the ACM bundle $E$ on the given Veronese variety is assumed to  
have a rigid module of global sections. This implies, respectively,
$\Ext^1_{\PP^2}(E,E(3t))=0$, or $\Ext^1_{\PP^3}(E,E(2t))=0$, for all
$t \in \ZZ$.
A priori, this is a stronger requirement than just
$\Ext^1_{\PP^n}(E,E)=0$.
However, our proof shows that the two conditions are  equivalent for
ACM bundles.
\end{rmk}

\section{Rigid ACM bundles higher Veronese surfaces}

Assume $\bk$ algebraically closed.
The next result shows that, for $d \ge 4$, the class of rigid ACM bundles
on $d$-fold Veronese surfaces contains the class
of exceptional bundles on $\PP^2$, which is indeed quite
complicated, cf. \cite{drezet-lepotier:stables}.
At least if $\mathrm{char}(\bk)=0$, the two classes coincide by
\cite[Corollaire 7]{drezet:beilinson}.

\begin{thm}
  Let $F$ be an exceptional bundle on $\PP^2$ and fix $d \ge 4$.
  Then there is an integer $t$ such that $E=F(t)$ satisfies \eqref{sonmalato}.
\end{thm}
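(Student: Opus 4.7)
The plan is to rephrase the statement as the problem of avoiding a certain residue class modulo $d$. The ACM condition \eqref{sonmalato} for $E = F(t)$ on the $d$-fold Veronese embedding of $\PP^2$ amounts to $\hh^1(\PP^2, F(t+ds)) = 0$ for all $s \in \ZZ$; since $F$ is locally free, Serre vanishing (applied to both $F$ and $F^*(-3)$) makes the bad set
\[
\cB_F := \{ n \in \ZZ \mid \hh^1(\PP^2, F(n)) > 0 \}
\]
finite. A suitable $t$ then exists if and only if the image of $\cB_F$ in $\ZZ/d\ZZ$ is a proper subset, and since $d \ge 4$ it suffices to prove $|\cB_F| \le 3$.

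Two classical ingredients on exceptional bundles over $\PP^2$ do the job. First, exceptionality $\chi(F,F) = 1$ together with Hirzebruch--Riemann--Roch pins down the Drézet--Le Potier discriminant:
\[
\Delta(F) := \frac{c_2(F)}{r} - \frac{(r-1)\, c_1(F)^2}{2 r^2} = \frac{r^2 - 1}{2 r^2} < \frac{1}{2}, \qquad r = \rk(F).
\]
Second, I would invoke the cohomology dichotomy for exceptional bundles on $\PP^2$ (a classical consequence of $\mu$-stability combined with $\Delta(F) < \tfrac{1}{2}$, after Drézet): for each $n \in \ZZ$ at most one of $\hh^0(F(n))$, $\hh^1(F(n))$, $\hh^2(F(n))$ is nonzero, so $\hh^1(F(n)) > 0$ if and only if $\chi(F(n)) < 0$.

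Writing $\mu = c_1(F)/r$, Riemann--Roch takes the form $\chi(F(n)) = r\bigl[\tfrac{1}{2}(n+\mu+1)(n+\mu+2) - \Delta(F)\bigr]$, so $\chi(F(\,\cdot\,)) < 0$ on an open real interval of length $\sqrt{1 + 8\Delta(F)} < \sqrt{5} < 3$. Such an interval contains at most three integers, hence $|\cB_F| \le 3 < d$, and any $t$ whose class mod $d$ avoids $\cB_F \bmod d$ yields the desired ACM bundle $F(t)$. The main obstacle is the cohomology dichotomy, which is the single non-elementary input; the discriminant formula and the length bound on the interval are direct Chern-class and algebraic manipulations.
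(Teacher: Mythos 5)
Your proposal is correct and follows essentially the same route as the paper: stability of exceptional bundles gives the ``at most one nonzero cohomology group'' dichotomy, so the bad twists are exactly the integers where $\chi(F(n))<0$, and the exceptionality condition $\chi(F,F)=1$ bounds the gap between the roots of this quadratic by $\sqrt{5}<3$, leaving at most three bad residues, fewer than $d\ge 4$. The only cosmetic difference is that you phrase the root-gap computation via the normalized Drézet--Le Potier discriminant $\Delta(F)=(r^2-1)/(2r^2)<1/2$ rather than the raw discriminant of the polynomial $\chi(F(t))$, which is an equivalent calculation.
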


\begin{proof}
  It is known that $F$ is actually stable by \cite{drezet-lepotier:stables}.
  This implies that $F$ has natural cohomology by
  \cite{hirschowitz-laszlo:generiques}, i.e. for all $t \in \ZZ$ there
  is at most one $i$ such that $\HH^i(\PP^2,F(t)) \ne 0$.
  Then, $\HH^1(\PP^2,F(t)) \ne 0$ if and only if $\chi(F(t))<0$.

  Let now that $r$, $c_1$ and $c_2$ be the rank and the Chern classes
  of $F$. Computing $\chi$ by additivity, we see that $\chi(F(t))$ is a polynomial of
  degree $2$ in $t$, of dominant term $r/2$, whose discriminant is:
  \[
  \Delta = {c}_{1}^{2}(1-r)+r (2{c}_{2}+r/4 ) = -\chi(F,F)+5r^2/4.
  \]

  So, using $\chi(F,F)=1$, we get $\Delta = -1+5r^2/4$.
  Therefore, the roots of $\chi(F(t))$ differ at most by:
  \[
  \mbox{$\lceil \frac{2\sqrt{\Delta}}{r} \rceil = \lceil\frac{\sqrt{5r^2-4}}{r} \rceil \le 3$}.
  \]
  Then, there are at most three consecutive integers $t_0$, $t_0+1$, $t_0+2$ such that
  $\HH^1(\PP^2,F(t_0+j))\ne 0$ for $j=0,1,2$. This means that
  $E=F(t_0-1)$ satisfies \eqref{sonmalato} for any choice of $d \ge 4$.
\end{proof}

\noindent {\bf Acknowledgements}. I would like to thank
F.-O. Schreyer, J. Pons Llopis and M. C. Brambilla for useful comments
and discussions.

\bibliographystyle{alpha-my}
\bibliography{bibliography}


\vfill

\end{document}